\documentclass{amsart}
\usepackage{amsfonts}

\setcounter{MaxMatrixCols}{10}

\theoremstyle{plain}

\newtheorem{corollary}{Corollary}

\newtheorem{proposition}{Proposition}
\newtheorem{remark}{Remark}

\numberwithin{equation}{section}
\input{tcilatex}

\begin{document}
\title[$q-$Normal]{Moments of $q-$Normal and conditional $q-$Normal
distributions}
\author{Pawe\l\ J. Szab\l owski}
\address{ address line 1\\
Warsaw University of Technology}
\email{pawel.szablowski@gmail.com}
\date{May, 2015}
\subjclass[2000]{Primary 62E10, 60A10; Secondary 62E17, 62H05}
\keywords{$q$-Normal, conditional $q-$Normal, Wigner, Kesten-McKay
distributions, moments, moment generating function, modified Bessel
functions, expansions in modified Bessel functions}

\begin{abstract}
We calculate moments and moment generating functions of two distributions: 

the so called $q-$Normal and the so called conditional $q-$Normal
distributions. These distributions generalize both Normal ($q=1),$ Wigner ($%
q=0,$ $q-$Normal) and Kesten-McKay ($q=0,$ conditional $q-$Normal)
distributions. As a by product we get asymptotic properties of some
expansions in modified Bessel functions.
\end{abstract}

\maketitle

\section{Introduction}

The purpose of this short note is to present exact forms of moments and
moment generating functions (i.e. Laplace transforms) of four distributions
with densities that are denoted by: $f_{h}(x|q),$ $f_{N}(x|q)$ and $%
f_{Q}(x|a,b,q),$ $f_{CN}(x|y,\rho ,q).$ In fact distributions $f_{h}$ and $%
f_{N}$ are related to one another by the linear transformation of random
variables having these distributions. Similarly distributions $f_{Q}$ and $%
f_{CN}$ are interrelated. The details will presented below. Two of the
presented below distributions are called respectively $q-$Normal ($f_{N}$)
and conditional $q-$Normal ($f_{CN}$). These distributions are the elements
of the chain of attempts to generalize Normal distribution that exist in the
literature. The first of the considered in this paper distributions ($f_{N}$%
) was described first in the noncommutative probability context in \cite{Bo}%
, later in the classical probability context in \cite{Bryc2001S}. Of course
it was not the only one attempt to generalize Gaussian distribution. For
others, different see e.g. \cite{the07}, \cite{Umar08}, \cite{Tsal08}.

Let us mention also that for particular values of the parameter $q$ we get
Gaussian (Normal) ($q\allowbreak =\allowbreak 1$), Wigner and generalized
Kesten-McKay ($q\allowbreak =\allowbreak 0$) distributions. Let us remind
that distribution $f_{N}$ appears in many models of the so called $q-$%
oscillators that are considered in quantum physics (to mention only \cite%
{Atak08}, \cite{klimyk05}, \cite{At-At10}, \cite{AsSus93}). On the other
hand Wigner or semicircle and Kesten-McKay distributions appear as limiting
distributions of certain combinatorial considerations and also in the
context of random graphs, random matrices and large deviations. By the
generalized Kesten-McKay distribution we mean distribution that has density
of the form $C\sqrt{a^{2}-x^{2}}/Q_{2}(x),$ where $Q_{2}(x)$ denotes
quadratic polynomial that is positive on $[-a,a]$ and $C$ is some
normalizing constant. Formal definition dating back to papers of Kesten \cite%
{Kesten 59} or McKay \cite{McKay81} concerned very special form of quadratic
polynomial $Q.$ For more recent uses of Kesten-McKay distribution see e.g. 
\cite{0ren10}, \cite{Oren09}, \cite{sodin 2007}.

The distributions that we are going to recall in this paper have appeared
also in the context of stochastic processes allowing generalization of
Wiener and Orstein-Uhlenbeck (see e.g. \cite{Szab-OU-W}) processes and also
in the context of quadratic harnesses (for review of the rich literature on
this subject see \cite{SzabHar}).

The paper is organized as follows. In the next section we recall basic
notation and basic properties of the analyzed in the paper distributions. In
Section \ref{s-mom} we present our results. In Section \ref{dow} we
collected longer proofs.

\section{Notation and basic notions}

To present these distributions we will use notation commonly used in the
context of the so called $q-$series theory. Nice introductions to this
theory can be found \cite{Andrews1999} or \cite{IA}.

So $q$ will be a parameter such that $q\in (-1,1>$ . For $\left\vert
q\right\vert <1$ the formulae will be explicit, while the case $q\allowbreak
=\allowbreak 1$ will sometimes be understood as a limiting case.

We set $\left[ 0\right] _{q}\allowbreak =\allowbreak 0,$ $\left[ n\right]
_{q}\allowbreak =\allowbreak 1+q+\ldots +q^{n-1}\allowbreak ,$ $\left[ n%
\right] _{q}!\allowbreak =\allowbreak \prod_{j=1}^{n}\left[ j\right] _{q},$
with $\left[ 0\right] _{q}!\allowbreak =1,$%
\begin{equation*}
\QATOPD[ ] {n}{k}_{q}\allowbreak =\allowbreak \left\{ 
\begin{array}{ccc}
\frac{\left[ n\right] _{q}!}{\left[ n-k\right] _{q}!\left[ k\right] _{q}!} & 
, & n\geq k\geq 0, \\ 
0 & , & otherwise.%
\end{array}%
\right.
\end{equation*}%
We will use also the so called $q-$Pochhammer symbol for $n\geq 1:$ $\left(
a;q\right) _{n}=\prod_{j=0}^{n-1}\left( 1-aq^{j}\right) ,~~\left(
a_{1},a_{2},\ldots ,a_{k};q\right) _{n}\allowbreak =\allowbreak
\prod_{j=1}^{k}\left( a_{j};q\right) _{n},$with $\left( a;q\right) _{0}=1$.

Often $\left( a;q\right) _{n}$ as well as $\left( a_{1},a_{2},\ldots
,a_{k};q\right) _{n}$ will be abbreviated to $\left( a\right) _{n}$ and 
\newline
$\left( a_{1},a_{2},\ldots ,a_{k}\right) _{n},$ if the base will be $q$ and
if such abbreviation will not cause misunderstanding.

It is easy to notice that for $q\allowbreak \in \allowbreak (-1,1)$ we have: 
$\left( q\right) _{n}=\left( 1-q\right) ^{n}\left[ n\right] _{q}!$ and that $%
\QATOPD[ ] {n}{k}_{q}\allowbreak =\allowbreak \allowbreak \left\{ 
\begin{array}{ccc}
\frac{\left( q\right) _{n}}{\left( q\right) _{n-k}\left( q\right) _{k}} & ,
& n\geq k\geq 0, \\ 
0 & , & otherwise.%
\end{array}%
\right. $

To support intuition let us notice that: $\left[ n\right] _{1}\allowbreak
=\allowbreak n,\left[ n\right] _{1}!\allowbreak =\allowbreak n!,~~\QATOPD[ ]
{n}{k}_{1}\allowbreak =\allowbreak \binom{n}{k},~~\left( a;1\right)
_{n}\allowbreak =\allowbreak \left( 1-a\right) ^{n}$ and $\left[ n\right]
_{0}\allowbreak =\allowbreak \left\{ 
\begin{array}{ccc}
1 & if & n\geq 1, \\ 
0 & if & n=0,%
\end{array}%
\right. \left[ n\right] _{0}!\allowbreak =\allowbreak 1,\QATOPD[ ] {n}{k}%
_{0}\allowbreak =\allowbreak 1,$ $\left( a;0\right) _{n}\allowbreak
=\allowbreak \left\{ 
\begin{array}{ccc}
1 & if & n=0, \\ 
1-a & if & n\geq 1.%
\end{array}%
\right. $.

Let us denote for simplicity the following real subsets:%
\begin{equation}
J\left( q\right) =\left\{ 
\begin{array}{ccc}
\lbrack -2/\sqrt{1-q},2/\sqrt{1-q}] & if & \left\vert q\right\vert <1 \\ 
\mathbb{R} & if & q=1%
\end{array}%
\right. .  \label{S(q)}
\end{equation}

The four distributions that we are going to consider in the paper are
defined by the densities:%
\begin{gather}
f_{h}(x|q)\allowbreak =\allowbreak \frac{2(q)_{\infty }}{^{\pi }}\sqrt{%
1-x^{2}}\prod_{j=1}^{\infty }((1+q^{k})^{2}-4q^{k}x^{2})I_{[-1,1]}(x),
\label{fh} \\
f_{N}\left( x|q\right) =\frac{\sqrt{1-q}\left( q\right) _{\infty }}{2\pi 
\sqrt{4-(1-q)x^{2}}}\prod_{k=0}^{\infty }\left(
(1+q^{k})^{2}-(1-q)x^{2}q^{k}\right) I_{J\left( q\right) }\left( x\right) ,
\label{fN} \\
f_{Q}(x|a,b,q)\allowbreak =\allowbreak \frac{\left( q,ab\right) _{\infty }}{%
2\pi \sqrt{1-x^{2}}}\prod_{k=0}^{\infty }\frac{((1+q^{k})^{2}-4q^{k}x^{2})}{%
w_{k}(x|a,b,q)}I_{[-1,1]}(x),  \label{fQ} \\
f_{CN}\left( x|y,\rho ,q\right) =f_{N}\left( x|q\right) \prod_{k=0}^{\infty }%
\frac{(1-\rho ^{2}q^{k})}{W_{k}\left( x,y|\rho ,q\right) }I_{J\left(
q\right) }\left( x\right) ,  \label{fCN}
\end{gather}%
where we denoted: $I_{A}\left( x\right) \allowbreak =\allowbreak \left\{ 
\begin{array}{ccc}
1 & if & x\in A \\ 
0 & if & x\notin A%
\end{array}%
\right. $, and $w_{k}$ and $W_{k}$ are the following polynomials: 
\begin{gather}
W_{k}\left( x,y|\rho ,q\right) =(1-\rho ^{2}q^{2k})^{2}-(1-q)\rho
q^{k}(1+\rho ^{2}q^{2k})xy+(1-q)\rho ^{2}(x^{2}+y^{2})q^{2k},  \label{w_k} \\
w_{k}(x|a,b,q)\allowbreak =\allowbreak
(1+a^{2}q^{2k})(1+b^{2}q^{2k})-2x(a+b)q^{k}(1+abq^{2k})+4x^{2}abq^{2k}.
\label{wkk}
\end{gather}%
$k\allowbreak =\allowbreak 0,1,2,\ldots $ . \newline
Notice that $\forall k\geq 0:$ $w_{k}\left( x|a,b,q\right) \allowbreak
=\allowbreak w_{0}\left( x|aq^{k},bq^{k},1\right) ,$ $W_{k}(x,y|\rho
,q)\allowbreak =\allowbreak W_{0}(x,y|\rho q^{k},q)$ and that $W_{k}\left(
x,y|0,q\right) \allowbreak =\allowbreak 1$.

Parameters characterizing these distributions (other than $q)$ have the
following ranges: $y\in J(q),$ $\left\vert \rho \right\vert <1,$ $\left\vert
a\right\vert ,\left\vert b\right\vert <1$.

These densities are defined for $\left\vert q\right\vert <1$ with
possibility to extend this range to $q\in (-1,1]$ for densities $f_{N}$ and $%
f_{CN}$ but the cases $q\allowbreak =\allowbreak 1$ will be understood as
limit cases. Thus important special cases can be summed up as follows:%
\begin{eqnarray*}
f_{h}(x|0)\allowbreak  &=&\allowbreak \frac{2}{^{\pi }}\sqrt{1-x^{2}}%
I_{[-1,1]}(x),~f_{N}(x|0)\allowbreak =\allowbreak \frac{1}{2\pi }\sqrt{%
4-x^{2}}I_{[-2,2]}(x), \\
f_{N}\left( x|1\right) \allowbreak  &=&\allowbreak \frac{1}{\sqrt{2\pi }}%
\exp \left( -x^{2}/2\right) ,~f_{Q}(x|a,b,0)\allowbreak =\allowbreak \frac{%
2\left( 1-ab\right) \sqrt{1-x^{2}}}{\pi w_{0}(x|a,b,1)}, \\
f_{CN}(x|y,\rho ,0)\allowbreak  &=&\allowbreak \frac{(1-\rho ^{2})\sqrt{%
4-x^{2}}}{2\pi W_{0}(x,y|\rho ,1)},~f_{CN}\left( x|y,\rho ,1\right)
\allowbreak =\allowbreak \frac{1}{\sqrt{2\pi \left( 1-\rho ^{2}\right) }}%
\exp \left( -\frac{\left( x-\rho y\right) ^{2}}{2\left( 1-\rho ^{2}\right) }%
\right) .
\end{eqnarray*}%
It is known (see e.g. \cite{IA} but also detailed review \cite{Szab-rev})
that these distributions make the following families of polynomials
orthogonal. These families will be defined through their 3-term recurrences:%
\begin{gather}
h_{n+1}(x|q)\allowbreak =\allowbreak 2xh_{n}(x|q)-(1-q^{n})h_{n-1}(x|q),
\label{qh} \\
H_{n+1}(x|q)=xH_{n}(x|q)-\left[ n\right] _{q}!H_{n-1}(x|q),  \label{qH} \\
Q_{n+1}(x|a,b,q)=\allowbreak
(2x-(a+b)q^{n})Q_{n}(x|a,b,q)-(1-q^{n})(1-abq^{n-1})Q_{n-1}(x|a,b,q),
\label{ASC} \\
P_{n+1}(x|y,\rho ,q)=(x-\rho yq^{n})P_{n}(x|y,\rho ,q)-(1-\rho
^{2}q^{n-1})[n]_{q}P_{n-1}(x|y,\rho ,q),  \label{ASCN}
\end{gather}%
with $h_{-1}(x|q)\allowbreak =\allowbreak H_{-1}(x|q)\allowbreak
=\allowbreak Q_{-1}(x|a,b,q)\allowbreak =\allowbreak P_{-1}(x|y,\rho
,q)\allowbreak =\allowbreak 0$, $h_{0}(x|q)\allowbreak =\allowbreak
H_{0}(x|q)\allowbreak =\allowbreak Q_{0}(x|a,b,q)\allowbreak =\allowbreak
P_{0}(x|y,\rho ,q)\allowbreak =\allowbreak 1.$ Polynomials $h_{n}$ and $H_{n}
$ are called $q-$Hermite, (more precisely continuous $q-$Hermite), while $%
Q_{n}$ and $P_{n}$ Al-Salam--Chihara polynomials.

It is also known that the orthogonal relations have the following form: 
\begin{gather}
\int_{J(q)}H_{m}(x|q)H_{n}(x|q)f_{N}(x|q)dx\allowbreak =\allowbreak \left\{ 
\begin{array}{ccc}
0 & if & m\neq n \\ 
\left[ n\right] _{q}! & if & n\allowbreak =\allowbreak m%
\end{array}%
\right. ,  \label{ort1} \\
\int_{-1}^{1}h_{n}(x|q)h_{m}(x|q)f_{h}(x|q)dx=\left\{ 
\begin{array}{ccc}
0 & if & m\neq n \\ 
\left( q\right) _{n} & if & m=n%
\end{array}%
\right. ,  \label{ott111} \\
\int_{-1}^{1}Q_{n}(x|a,b,q)Q_{m}(x|a,b,q)f_{Q}(x|a,b,q)dx=\left\{ 
\begin{array}{ccc}
0 & if & m\neq n \\ 
\left( q,ab\right) _{n} & if & m=n%
\end{array}%
\right. ,  \label{ort22} \\
\int_{J(q)}P_{m}(x|y,\rho ,q)P_{n}(x|y,\rho ,q)f_{CN}(x|y,\rho
,q)dx\allowbreak =\left\{ 
\begin{array}{ccc}
0 & if & n\neq m \\ 
\left( \rho ^{2}\right) _{n}\left[ n\right] _{q}! & if & n\allowbreak
=\allowbreak m%
\end{array}%
\right. .  \label{ort2}
\end{gather}

There are interesting special cases: $h_{n}(x|0)\allowbreak =\allowbreak
U_{n}(x),$ $H_{n}(x|0)\allowbreak =\allowbreak U_{n}(x/2),$ $%
H_{n}(x|1)\allowbreak =H_{n}(x),\allowbreak Q_{n}(x|a,b,0)\allowbreak
=\allowbreak U_{n}(x)\allowbreak -\allowbreak (a+b)U_{n-1}(x)\allowbreak
+\allowbreak abU_{n-1}(x),$ $P_{n}(x|y,\rho ,0)\allowbreak =\allowbreak
U_{n}(x/2)\allowbreak -\allowbreak \rho yU_{n-1}(x/2)+\rho ^{2}U_{n-1}(x/2),$
$P_{n}(x|y,\rho ,1)\allowbreak =\allowbreak (1-\rho ^{2})^{n/2}H_{n}((x-\rho
y)/\sqrt{1-\rho ^{2}}),$ where we denoted by $U_{n}$ $n-$th Chebyshev
polynomial of the second kind and $H_{n}(x)$ denotes ordinary Hermite
polynomial (so called probabilistic) i.e. monic orthogonal with respect to
measure with the density: $\exp (-x^{2}/2)/\sqrt{2\pi }.$

All the above mentioned facts can be found in \cite{IA} but also in more
detail in \cite{bms}, \cite{Szabl-intAW}, \cite{Szablowski2010(1)}, \cite%
{Szab-bAW}.

In the sequel we will need the following two facts: 
\begin{eqnarray}
\int_{-1}^{1}h_{n}(x|q)f_{Q}(x|a,b,q)dx &=&S_{n}(a,b|q),  \label{tran1} \\
\int_{S\left( q\right) }H_{n}(x|q)f_{CN}(x|y,\rho ,q)dx &=&\rho
^{n}H_{n}(y|q),  \label{tran2}
\end{eqnarray}%
where $S_{n}(a,b|q)\allowbreak =\allowbreak \sum_{i=0}^{n}\QATOPD[ ] {n}{i}%
_{q}a^{i}b^{n-i}.$ The first of them is shown in \cite{Szab14}, the second
in \cite{bms}. Notice that $S_{n}(a,b|1)\allowbreak =\allowbreak (a+b)^{n},$ 
$S_{n}(a,b|0)\allowbreak =\allowbreak (a^{n+1}-b^{n+1})/(a-b)$ if $a\neq b$
and $S_{n}(a,a|0)\allowbreak =\allowbreak (n+1)a^{n}.$

\section{Moments\label{s-mom}}

We will need the following and the following expansion: 
\begin{equation}
(1-q)^{n/2}x^{n}\allowbreak =\allowbreak \sum_{k=0}^{\left\lfloor
n/2\right\rfloor }(\binom{n}{k}-\binom{n}{k-1})U_{n-2k}(x\sqrt{1-q}/2),
\label{expand}
\end{equation}%
that can be easily obtained from the relationship:%
\begin{equation*}
xU_{n}(x/2)\allowbreak =\allowbreak U_{n+1}(x/2)+U_{n-1}(x/2).
\end{equation*}%
This expansion can easily be modified to obtain the following ones:%
\begin{eqnarray}
2^{n}x^{n} &=&\sum_{k=0}^{\left\lfloor n/2\right\rfloor }(\binom{n}{k}-%
\binom{n}{k-1})U_{n-2k}(x),  \label{m_U} \\
x^{n} &=&\sum_{k=0}^{\left\lfloor n/2\right\rfloor }(\binom{n}{k}-\binom{n}{%
k-1})U_{n-2k}(x/2).  \label{m_U1}
\end{eqnarray}

Let us remark that $(n-2k+1)\binom{n+1}{k}/(n+1)\allowbreak =\allowbreak 
\binom{n}{k}\allowbreak -\allowbreak \binom{n}{k-1}.$

Basing on these expansions we are able to formulate the following
proposition giving expansions of $x^{n}$ in the series of $q-$Hermite
polynomials.

\begin{proposition}
Let us denote $c_{m,n}(q)\allowbreak =\allowbreak
\sum_{j=0}^{m}(-1)^{j}q^{j(j+1)/2}(\binom{n}{m-j}-\binom{n}{m-j-1})\QATOPD[ ]
{n-2m+j}{j}_{q},$ defined for $n\geq 1,$ $m\leq \left\lfloor
n/2\right\rfloor .$ Then 
\begin{equation*}
x^{n}\allowbreak =\allowbreak \sum_{m=0}^{\left\lfloor n/2\right\rfloor
}(1-q)^{-2m}c_{m,n}H_{n-2m}(x|q),~x^{n}=\allowbreak \frac{1}{2^{n}}%
\sum_{m=0}^{\left\lfloor n/2\right\rfloor }c_{m,n}h_{n-2m}(x|q).
\end{equation*}
\end{proposition}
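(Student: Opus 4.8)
The plan is to prove the second identity (the expansion in the $h_{n}$'s) and deduce the first from it by the rescaling that links the two $q$-Hermite families. Comparing the recurrences \eqref{qh} and \eqref{qH} one checks that $H_{n}(x|q)=(1-q)^{-n/2}h_{n}(x\sqrt{1-q}/2\,|\,q)$, equivalently $h_{j}(x\sqrt{1-q}/2\,|\,q)=(1-q)^{j/2}H_{j}(x|q)$. Substituting $y=x\sqrt{1-q}/2$ into $2^{n}y^{n}=\sum_{m}c_{m,n}h_{n-2m}(y|q)$ and using the last relation converts the left side to $x^{n}(1-q)^{n/2}$ and each summand to $(1-q)^{(n-2m)/2}H_{n-2m}(x|q)$; dividing by $(1-q)^{n/2}$ leaves the power $(n-2m)/2-n/2=-m$ of $(1-q)$, which reconciles the two displays. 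So it suffices to establish
\[
2^{n}x^{n}=\sum_{m=0}^{\lfloor n/2\rfloor}c_{m,n}(q)\,h_{n-2m}(x|q).
\]

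Since \eqref{m_U} already expands $2^{n}x^{n}$ in Chebyshev polynomials $U_{n-2k}(x)$, the whole statement reduces to one connection formula expressing each $U_{p}$ in the $q$-Hermite basis:
\[
U_{p}(x)=\sum_{l=0}^{\lfloor p/2\rfloor}(-1)^{l}q^{l(l+1)/2}\QATOPD[ ] {p-l}{l}_{q}\,h_{p-2l}(x|q).
\]
Granting this, I would substitute it into \eqref{m_U} with $p=n-2k$, set $m=k+l$, and relabel $j=m-k$; the coefficient of $h_{n-2m}(x|q)$ then collapses to exactly $\sum_{j=0}^{m}(-1)^{j}q^{j(j+1)/2}(\binom{n}{m-j}-\binom{n}{m-j-1})\QATOPD[ ] {n-2m+j}{j}_{q}=c_{m,n}(q)$. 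Thus the entire content of the proposition is packed into this single connection formula.

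To prove the connection formula I would induct on $p$, using the Chebyshev recurrence $U_{p+1}(x)=2xU_{p}(x)-U_{p-1}(x)$ (the stated relation $xU_{n}(x/2)=U_{n+1}(x/2)+U_{n-1}(x/2)$ written in the variable $x$) together with \eqref{qh} in the form $2x\,h_{j}(x|q)=h_{j+1}(x|q)+(1-q^{j})h_{j-1}(x|q)$. Writing $\gamma_{p,l}(q)=(-1)^{l}q^{l(l+1)/2}\QATOPD[ ] {p-l}{l}_{q}$ and feeding the hypotheses for $U_{p}$ and $U_{p-1}$ into $U_{p+1}=2xU_{p}-U_{p-1}$, the coefficient of $h_{p+1-2l}(x|q)$ yields
\[
\gamma_{p+1,l}=\gamma_{p,l}+(1-q^{p-2l+2})\gamma_{p,l-1}-\gamma_{p-1,l-1},
\]
with base cases $U_{0}=h_{0}$, $U_{1}=h_{1}$ and the convention $\gamma_{\cdot,-1}=0$ (which also disposes of the boundary index $l=0$).

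The main obstacle is verifying that the closed form $\gamma_{p,l}$ satisfies this recurrence, i.e.\ a Gaussian-binomial identity. After dividing by $(-1)^{l}q^{(l-1)l/2}$ it becomes
\[
q^{l}\QATOPD[ ] {p+1-l}{l}_{q}=q^{l}\QATOPD[ ] {p-l}{l}_{q}-(1-q^{p-2l+2})\QATOPD[ ] {p-l+1}{l-1}_{q}+\QATOPD[ ] {p-l}{l-1}_{q}.
\]
I would apply the $q$-Pascal rule $\QATOPD[ ] {p+1-l}{l}_{q}=\QATOPD[ ] {p-l}{l}_{q}+q^{p+1-2l}\QATOPD[ ] {p-l}{l-1}_{q}$ to the left side and cancel the common $q^{l}\QATOPD[ ] {p-l}{l}_{q}$; what remains follows from the elementary ratio $\QATOPD[ ] {p-l+1}{l-1}_{q}=\frac{1-q^{p-l+1}}{1-q^{p-2l+2}}\QATOPD[ ] {p-l}{l-1}_{q}$, which reduces both sides to $q^{p+1-l}\QATOPD[ ] {p-l}{l-1}_{q}$ and closes the induction. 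An alternative (slightly heavier) route bypasses the connection formula entirely: expand $c_{m,n}$ directly and check the recurrence $c_{m,n+1}=c_{m,n}+(1-q^{n-2m+2})c_{m-1,n}$ forced by $2x\cdot2^{n}x^{n}=2^{n+1}x^{n+1}$; I prefer routing everything through the single clean connection formula, where the $q$-bookkeeping is lightest.
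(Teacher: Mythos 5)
Your proof is correct, and its skeleton is the same as the paper's: expand $2^{n}x^{n}$ via (\ref{m_U}) in the Chebyshev polynomials $U_{n-2k}$, convert each $U_{p}$ into the $q$-Hermite basis, and collect coefficients along the diagonal $m=k+j$ to produce $c_{m,n}(q)$. The difference is in how the key lemma is sourced. The paper simply quotes both connection formulas, $U_{n}(x)=\sum_{j}(-1)^{j}q^{j(j+1)/2}\QATOPD[ ] {n-j}{j}_{q}h_{n-2j}(x|q)$ and its $H$-analogue, from \cite{Szablowski2009} (formula (4.2)), while you prove the $h$-version from scratch by induction on the recurrences (your coefficient recurrence $\gamma_{p+1,l}=\gamma_{p,l}+(1-q^{p-2l+2})\gamma_{p,l-1}-\gamma_{p-1,l-1}$ is the right one, and the $q$-Pascal plus ratio manipulation that closes the induction checks out, with the paper's convention that out-of-range $q$-binomials vanish handling the boundary indices). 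You then derive the $H$-expansion from the $h$-expansion via the rescaling $H_{n}(x|q)=(1-q)^{-n/2}h_{n}(x\sqrt{1-q}/2\,|\,q)$, where the paper instead invokes the second cited formula; note this rescaling requires reading (\ref{qH}) with coefficient $[n]_{q}$ rather than the printed $[n]_{q}!$, which is a typo, as (\ref{ort1}) and the limit $H_{n}(x|1)=H_{n}(x)$ confirm. So your argument buys self-containedness at the cost of the inductive verification, whereas the paper's is a two-line citation-based proof.

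One point you glossed over: your rescaling yields $x^{n}=\sum_{m}(1-q)^{-m}c_{m,n}H_{n-2m}(x|q)$, with exponent $-m$, while the proposition as printed has $(1-q)^{-2m}$; these do not ``reconcile,'' they disagree. In fact the printed exponent is a typo and your computation gives the correct statement: for $n=2$ one has $c_{1,2}=1-q$ and $x^{2}=H_{2}(x|q)+H_{0}(x|q)$, which forces the factor $(1-q)^{-1}$, not $(1-q)^{-2}$; moreover the exponent $-m$ is exactly what is used later in Proposition \ref{mqCN} and in its proof in Section \ref{dow}. You should have flagged this discrepancy explicitly rather than asserting agreement, but it does not affect the validity of your argument.
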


\begin{proof}
We use (\ref{m_U}) and (\ref{m_U1}) and the two following expansions first
of which was shown in \cite{Szablowski2009} (4.2) and the other is its
obvious modification:%
\begin{eqnarray*}
U_{n}\left( x\sqrt{1-q}/2\right) &=&\sum_{j=0}^{\left\lfloor
n/2\right\rfloor }\left( -1\right) ^{j}(1-q)^{n/2-j}q^{j\left( j+1\right) /2}%
\QATOPD[ ] {n-j}{j}_{q}H_{n-2j}\left( x|q\right) , \\
U_{n}(x) &=&\sum_{j=0}^{\left\lfloor n/2\right\rfloor }\left( -1\right)
^{j}q^{j\left( j+1\right) /2}\QATOPD[ ] {n-j}{j}_{q}h_{n-2j}\left(
x|q\right) .
\end{eqnarray*}
\end{proof}

\begin{proposition}
\label{mqN}Let $X\allowbreak \sim f_{N}$ then i) $\forall n\geq 1:$%
\begin{gather}
(1-q)^{n/2}\int_{-2/\sqrt{1-q}}^{2/\sqrt{1-q}}y^{n}f_{H}(y|q)dx=\left\{ 
\begin{array}{ccc}
0 & if & n\text{~is odd} \\ 
\frac{1}{2j+1}\sum_{k=0}^{j}(-1)^{k}q^{\binom{k+1}{2}}(2k+1)\binom{2j+1}{j-k}
& if & if~n\allowbreak =\allowbreak 2j%
\end{array}%
\right. ,  \label{mom} \\
\varphi _{N}(t|q)\allowbreak =\allowbreak E\exp (tX)\allowbreak =\allowbreak 
\frac{\sqrt{1-q}}{t}\sum_{k=0}^{\infty }(-1)^{k}q^{\binom{k+1}{2}%
}(2k+1)I_{2k+1}(2t/\sqrt{1-q}),
\end{gather}%
where $I_{k}(t)$ is the modified Bessel function of the first kind.

ii) Let $X\allowbreak \sim \allowbreak f_{h}$ then 
\begin{gather}
\int_{-1}^{1}x^{n}f_{h}(x|q)dx=\left\{ 
\begin{array}{ccc}
0 & if & n~\text{is odd} \\ 
\frac{1}{4^{j}\left( 2j+1\right) }\sum_{k=0}^{j}(-1)^{k}q^{\binom{k+1}{2}%
}(2k+1)\binom{2j+1}{j-k} & if & n=2j%
\end{array}%
\right. ,  \label{mom2} \\
\varphi _{h}(t|q)\allowbreak =\allowbreak E\exp (tX)\allowbreak =\allowbreak 
\frac{2}{t}\sum_{k=0}^{\infty }(-1)^{k}q^{\binom{k+1}{2}}(2k+1)I_{2k+1}(t). 
\notag
\end{gather}
\end{proposition}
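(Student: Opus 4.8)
The plan is to derive everything from the two expansions of $x^{n}$ in $q$-Hermite polynomials given in the preceding Proposition, combined with the orthogonality relations (\ref{ort1}) and (\ref{ott111}).

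For the moments in part i), I would substitute the expansion $x^{n}=\sum_{m=0}^{\lfloor n/2\rfloor}(1-q)^{-m}c_{m,n}(q)H_{n-2m}(x|q)$ (whose power $(1-q)^{-m}$ is what pairs with the prefactor $(1-q)^{n/2}$) into $\int_{J(q)}x^{n}f_{N}(x|q)\,dx$ and integrate termwise. Since $H_{0}\equiv 1$, relation (\ref{ort1}) gives $\int_{J(q)}H_{n-2m}(x|q)f_{N}(x|q)\,dx=\int_{J(q)}H_{n-2m}(x|q)H_{0}(x|q)f_{N}(x|q)\,dx$, which vanishes unless $n-2m=0$, so only the summand $m=n/2$ survives. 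For odd $n$ there is no such term and the moment is $0$ (as it must be, $f_{N}$ being symmetric), while for $n=2j$ the surviving term equals $(1-q)^{-j}c_{j,2j}(q)$, so that $(1-q)^{n/2}\mu_{n}=c_{j,2j}(q)$. It remains to put $c_{j,2j}$ in closed form: at $m=j$, $n=2j$ the $q$-binomial $\QATOPD[ ]{n-2m+i}{i}_{q}$ reduces to $\QATOPD[ ]{i}{i}_{q}=1$, and the displayed remark $\binom{N}{k}-\binom{N}{k-1}=\frac{N-2k+1}{N+1}\binom{N+1}{k}$ applied with $N=2j$, $k=j-i$ replaces $\binom{2j}{j-i}-\binom{2j}{j-i-1}$ by $\frac{2i+1}{2j+1}\binom{2j+1}{j-i}$; this yields precisely (\ref{mom}). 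Part ii) is carried out verbatim from the companion expansion $x^{n}=2^{-n}\sum_{m}c_{m,n}(q)h_{n-2m}(x|q)$ and the orthogonality (\ref{ott111}) (only $h_{0}$ survives), the factor $2^{-n}=4^{-j}$ producing the extra $4^{-j}$ in (\ref{mom2}).

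For the generating functions I would work from the closed forms just obtained. Because $|q|<1$ the support $J(q)$ is compact, all moments are finite and $\varphi_{N}(\cdot|q)$ is entire, so $\varphi_{N}(t|q)=\sum_{j\ge 0}\frac{t^{2j}}{(2j)!}\mu_{2j}$ with the $\mu_{2j}$ from (\ref{mom}), and all rearrangements below are legitimate. Rather than resum this series directly, I would expand the proposed right-hand side: inserting the defining series $I_{2k+1}(z)=\sum_{l\ge 0}\frac{(z/2)^{2l+2k+1}}{l!\,(l+2k+1)!}$ with $z=2t/\sqrt{1-q}$ into $\frac{\sqrt{1-q}}{t}\sum_{k\ge 0}(-1)^{k}q^{\binom{k+1}{2}}(2k+1)I_{2k+1}(z)$ and collecting the coefficient of $t^{2j}$ through the substitution $j=k+l$ turns the double sum into $\sum_{j\ge 0}t^{2j}(1-q)^{-j}\sum_{k=0}^{j}(-1)^{k}q^{\binom{k+1}{2}}(2k+1)\frac{1}{(j-k)!\,(j+k+1)!}$. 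Matching this with the moment series reduces the claim to the elementary identities $\binom{2j+1}{j-k}=\frac{(2j+1)!}{(j-k)!\,(j+k+1)!}$ and $(2j+1)(2j)!=(2j+1)!$, which make the two coefficient families agree term by term. The formula for $\varphi_{h}$ follows identically, with $z=t$ and prefactor $2/t$ in place of $\sqrt{1-q}/t$, so that $(1-q)^{-j}$ is replaced by $4^{-j}$ and the moment series (\ref{mom2}) is reproduced.

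The orthogonality projection and the bookkeeping of the powers of $(1-q)$ and $2$ are routine. The one step that requires care is the identification of the moment series with a series of modified Bessel functions: the key observation is that the reindexing $j=k+l$ converts the moment double sum into the Bessel expansion, after which a single binomial identity closes the gap. The interchanges of summation this requires are justified by the entirety of $\varphi_{N}$ and $\varphi_{h}$ and the absolute, locally uniform convergence of the Bessel series.
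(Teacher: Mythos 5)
Your proof is correct, but the moment computation follows a genuinely different route from the paper's. The paper never touches the $q$-Hermite expansion of $x^{n}$ at this point: it expands the \emph{densities} themselves into Chebyshev series, namely (\ref{sz_exp1}) and (\ref{sz_exp2}) quoted from \cite{Szablowski2010(1)}, pairs these against the Chebyshev expansion (\ref{expand}) of $(1-q)^{n/2}x^{n}$, and invokes orthogonality of the $U_{n}$ with respect to the Wigner weight, so that the sum over $k$ in (\ref{mom}) arises from matching $n-2k=2m$ across the two expansions. You instead project the $q$-Hermite expansion of $x^{n}$ (the Proposition preceding Proposition \ref{mqN}) onto the constant polynomial via (\ref{ort1}) and (\ref{ott111}), so that only the term $m=n/2$ survives, and then evaluate $c_{j,2j}(q)$ in closed form; this is arguably cleaner, since it replaces the external density-expansion lemma by orthogonality relations already quoted in the paper, though it leans on that preceding Proposition instead (whose own proof rests on a connection-coefficient formula from \cite{Szablowski2009}). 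One caveat: you used the exponent $(1-q)^{-m}$ in that expansion, whereas the paper prints $(1-q)^{-2m}$ there; your exponent is the correct one (as re-derivation of the coefficient shows, and as the paper itself uses in the proof of Proposition \ref{mqCN}), so your argument stands, but you are implicitly correcting a typo rather than quoting the statement as printed. For the generating functions your computation is essentially the paper's run backwards: the paper starts from the moment series $\sum_{j}t^{2j}\mu _{2j}/(2j)!$ and resums it into Bessel functions using $\binom{2j+1}{j-k}/(2j+1)!=1/\left( (j-k)!\,(j+k+1)!\right) $ and the reindexing $m=j-k$, while you expand the Bessel series and match coefficients of $t^{2j}$ using the same identity; the two directions are equivalent, and your explicit appeal to absolute convergence to justify the interchanges of summation is a point the paper passes over in silence.
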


\begin{proof}
Is shifted to Section \ref{dow}.
\end{proof}

\begin{remark}
Setting $q\allowbreak =\allowbreak 0$ we get 
\begin{equation*}
\int_{-2}^{2}y^{2n}f_{H}(y|0)dx\allowbreak =\allowbreak \binom{2n}{n}-\binom{%
2n}{n-1}\allowbreak =\allowbreak \frac{1}{n+1}\binom{2n}{n}
\end{equation*}%
i.e. $n-$Catalan number. Setting $q\allowbreak =\allowbreak 1$ we get $%
\forall n\geq 1:$%
\begin{equation*}
(2n-1)!!\allowbreak =\allowbreak \lim_{q\rightarrow 1^{-}}\frac{1}{%
(2n+1)(1-q)^{n}}\sum_{k=0}^{n}(-1)^{k}q^{\binom{k+1}{2}}(2k+1)\binom{2n+1}{%
n-k}.
\end{equation*}
\end{remark}

As far as the moments of $f_{CN}$ and $f_{Q}$ are concerned we the following
result.

\begin{proposition}
\label{mqCN}Let $X\allowbreak \sim \allowbreak f_{CN}$ with parameters $%
y,\rho ,q$ then

i) 
\begin{gather*}
EX^{n}\allowbreak =\allowbreak \sum_{m=0}^{\left\lfloor n/2\right\rfloor
}(1-q)^{-m}\rho ^{n-2m}H_{n-2m}(y|q)c_{m,n}(q), \\
\varphi _{CN}(t,y,\rho ,q)\allowbreak =\allowbreak E\exp (tX)\allowbreak
=\allowbreak \frac{\sqrt{1-q}}{t}\sum_{k=0}^{\infty }\frac{(1-q)^{k/2}}{%
\left[ k\right] _{q}!}\rho ^{k}H_{k}(y|q) \\
\times \sum_{j=0}^{\infty }(-1)^{j}\frac{\left[ k+j\right] _{q}!(k+2j+1)}{%
\left[ j\right] _{q}!}q^{j(j+1)/2}I_{2j+k+1}(2t/\sqrt{1-q}).
\end{gather*}

ii) Let $X\allowbreak \sim \allowbreak f_{Q}$ with parameters $a,b,q$ then 
\begin{eqnarray*}
EX^{n}\allowbreak &=&\allowbreak \frac{1}{2^{n}}\sum_{j=0}^{\left\lfloor
n/2\right\rfloor }c_{j,n}S_{n-2j}(a,b|q), \\
\varphi _{Q}(t,a,b,q)\allowbreak &=&\allowbreak \frac{2}{t}%
\sum_{k=0}^{\infty }\frac{S_{k}(a,b|q)}{(q)_{k}}\sum_{j=0}^{\infty }(-1)^{j}%
\frac{(q)_{k+j}(k+2j+1)}{(q)_{j}}q^{j(j+1)/2}I_{2j+k+1}(t).
\end{eqnarray*}
\end{proposition}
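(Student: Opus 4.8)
The plan is to reduce both parts to the moment expansions of $x^{n}$ in $q$-Hermite and $q$-Hermite (Rogers) polynomials established in the first Proposition, together with the two integral identities (\ref{tran1}) and (\ref{tran2}) that describe how these polynomials behave under the measures $f_{CN}$ and $f_{Q}$. For part (i), I would start from the expansion
\begin{equation*}
x^{n}=\sum_{m=0}^{\left\lfloor n/2\right\rfloor }(1-q)^{-2m}c_{m,n}(q)H_{n-2m}(x|q),
\end{equation*}
integrate term by term against $f_{CN}(x|y,\rho ,q)$, and invoke (\ref{tran2}), namely $\int H_{k}(x|q)f_{CN}\,dx=\rho ^{k}H_{k}(y|q)$, so that each $H_{n-2m}$ contributes $\rho ^{n-2m}H_{n-2m}(y|q)$. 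Matching the power of $(1-q)$ requires absorbing one factor of $(1-q)^{-m}$ into the definition as written; this is a bookkeeping point I would verify but not belabor. Symmetrically, for part (ii) I would use the companion expansion $x^{n}=2^{-n}\sum_{m}c_{m,n}(q)h_{n-2m}(x|q)$, integrate against $f_{Q}(x|a,b,q)$, and apply (\ref{tran1}), $\int h_{n}(x|q)f_{Q}\,dx=S_{n}(a,b|q)$, to land directly on the stated moment formula $EX^{n}=2^{-n}\sum_{j}c_{j,n}S_{n-2j}(a,b|q)$.

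For the moment generating functions I would assemble $\varphi (t)=\sum_{n\ge 0}t^{n}EX^{n}/n!$ from the moment formulas just obtained and reorganize the double sum. The natural move is to interchange the order of summation so that the index $k$ counting the surviving polynomial degree ($k=n-2m$) becomes the outer index. The $y$- or $(a,b)$-dependent factor $\rho ^{k}H_{k}(y|q)/[k]_{q}!$ (respectively $S_{k}(a,b|q)/(q)_{k}$) then factors out, and what remains over the inner index $j$ is a universal $q$-combinatorial kernel built from $c_{m,n}(q)$. I expect this inner sum, after shifting $n=k+2j$, to collapse into the modified Bessel function $I_{2j+k+1}$: the Bessel function $I_{\nu}(2t/\sqrt{1-q})$ has the series $\sum_{r\ge 0}(t/\sqrt{1-q})^{2r+\nu}/\big(r!\,(r+\nu)!\big)$, and the alternating $q$-binomial coefficients inside $c_{m,n}$ are exactly what is needed to telescope the factorial weights into this form, producing the factor $(-1)^{j}q^{j(j+1)/2}$ and the multiplier $(k+2j+1)$ together with the ratio of $q$-factorials shown.

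The main obstacle will be the Bessel-function identification — showing that the inner sum over $j$ (equivalently, the raw power series in $t$ coming from the moments) reassembles into precisely the series $\sum_{j}(-1)^{j}\tfrac{[k+j]_{q}!(k+2j+1)}{[j]_{q}!}q^{j(j+1)/2}I_{2j+k+1}$. The cleanest route is to avoid double sums altogether and instead recall that Proposition \ref{mqN} already computed $\varphi_{N}$ and $\varphi_{h}$ as such Bessel series; since (\ref{tran1})--(\ref{tran2}) say $f_{Q}$ and $f_{CN}$ act on the orthogonal polynomials by the simple rules above, the generating functions $\varphi_{Q}$ and $\varphi_{CN}$ should be obtainable by weighting the degree-$k$ contribution of the $q$-Normal/$f_h$ generating function by $S_{k}(a,b|q)$ or $\rho^{k}H_{k}(y|q)$. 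Concretely, I would expand $\exp(tx)$ itself in the orthogonal basis (so that $E\exp(tX)=\sum_k a_k(t)\int \text{(basis)}_k\,f\,dx$), compute the coefficients $a_k(t)$ once as Bessel-type series using (\ref{expand})--(\ref{m_U1}) and the two inversion formulas from the first proof, and then specialize the integral via (\ref{tran1}) or (\ref{tran2}). Care is needed to justify the interchange of the infinite $t$-sum with the integral and with the infinite products defining the densities; since $f_{CN}$ and $f_{Q}$ have compact support (for $|q|<1$, on $J(q)$ and $[-1,1]$ respectively) and $I_{\nu}$ grows subexponentially in $\nu$, dominated convergence applies on any finite $t$-interval, so the rearrangements are legitimate.
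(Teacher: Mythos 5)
Your proposal is correct and takes essentially the same route as the paper: expand $x^{n}$ in the two $q$-Hermite bases, integrate term by term using (\ref{tran2}) and (\ref{tran1}) to obtain the moments, then form $\sum_{n\geq 0}t^{n}EX^{n}/n!$, interchange the order of summation so that $k=n-2m$ is outermost, and identify the inner sum with the series for $I_{2j+k+1}$ exactly as you describe. The $(1-q)$-power mismatch you flagged is real, but it is a typo in the paper's first Proposition (the coefficient there should be $(1-q)^{-m}$, not $(1-q)^{-2m}$, as one checks e.g.\ for $n=2$), so your derivation lands precisely on the stated formula.
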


\begin{proof}
is shifted to Section \ref{dow}.

As a corollary we get the following relationship.
\end{proof}

\begin{corollary}
i) $\lim_{q\rightarrow 1^{-}}c_{m,n}(q)/(1-q)^{m}\allowbreak =\allowbreak $%
\begin{gather*}
\lim_{q\rightarrow 1^{-}}\frac{1}{(1-q)^{m}}%
\sum_{j=0}^{m}(-1)^{j}q^{j(j+1)/2}(\binom{n}{m-j}-\binom{n}{m-j-1})\QATOPD[ ]
{n-2m+j}{j}_{q}\allowbreak \\
=\allowbreak \frac{n!}{2^{n}m!(n-2m)!},
\end{gather*}

ii) 
\begin{equation*}
\lim_{q\rightarrow 1^{-}}\frac{\sqrt{1-q}}{t}\sum_{k=0}^{\infty }(-1)^{k}q^{%
\binom{k+1}{2}}(2k+1)I_{2k+1}(2t/\sqrt{1-q})\allowbreak =\allowbreak \exp
(-t^{2}/2),
\end{equation*}

iii) 
\begin{gather*}
\lim_{q\rightarrow 1^{-}}\frac{\sqrt{1-q}}{t}\sum_{k=0}^{\infty }\frac{%
(1-q)^{k/2}}{\left[ k\right] _{q}!}\rho ^{k}H_{k}(y|q)\sum_{j=0}^{\infty
}(-1)^{j}\frac{\left[ k+j\right] _{q}!(k+2j+1)}{\left[ j\right] _{q}!} \\
\times q^{j(j+1)/2}I_{2j+k+1}(2t/\sqrt{1-q})\allowbreak =\allowbreak \exp
(t\rho y+(1-\rho ^{2})t^{2}/2).
\end{gather*}
\end{corollary}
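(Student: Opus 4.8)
All three assertions say that, as $q\to1^{-}$, the $q$-objects degenerate to their classical Gaussian counterparts; my plan is to derive the analytic limits (ii)--(iii) from the single algebraic limit (i) together with termwise moment convergence. For (i) I would not expand the alternating sum directly. Instead I use the expansion $x^{n}=\sum_{m=0}^{\lfloor n/2\rfloor}(1-q)^{-m}c_{m,n}(q)H_{n-2m}(x|q)$ established above, valid for every $|q|<1$. The monic $q$-Hermite polynomials obey the recurrence (\ref{qH}) whose coefficients $[k]_{q}=1+q+\dots+q^{k-1}\to k$ as $q\to1^{-}$, so each $H_{k}(x|q)$ converges coefficientwise to the classical (probabilist) Hermite polynomial $H_{k}(x)=H_{k}(x|1)$. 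Since $x^{n}$ is independent of $q$ and the $H_{n-2m}(x)$ have pairwise distinct degrees, hence are linearly independent, the coefficient $(1-q)^{-m}c_{m,n}(q)$ must converge to the coefficient of $H_{n-2m}(x)$ in the classical expansion $x^{n}=\sum_{m}\frac{n!}{2^{m}m!(n-2m)!}H_{n-2m}(x)$. This gives $\lim_{q\to1^{-}}c_{m,n}(q)/(1-q)^{m}=n!/\bigl(2^{m}m!(n-2m)!\bigr)$ with no further work; the check $c_{1,2}(q)=1-q$ fixes the denominator exponent at $2^{m}$.

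For (iii) I would feed (i) into the moment formula of Proposition~\ref{mqCN}(i), namely $EX^{n}=\sum_{m=0}^{\lfloor n/2\rfloor}(1-q)^{-m}\rho^{n-2m}H_{n-2m}(y|q)c_{m,n}(q)$, a finite sum. Letting $q\to1^{-}$ term by term, (i) supplies $(1-q)^{-m}c_{m,n}(q)\to n!/(2^{m}m!(n-2m)!)$ and the recurrence gives $H_{n-2m}(y|q)\to H_{n-2m}(y)$, so $\lim EX^{n}=\sum_{m}\frac{n!}{2^{m}m!(n-2m)!}\rho^{n-2m}H_{n-2m}(y)$. Combining the classical expansion of $x^{n}$ with the $q=1$ case of (\ref{tran2}), $E[H_{k}(Z)]=\rho^{k}H_{k}(y)$ for $Z\sim N(\rho y,1-\rho^{2})$, identifies this limit as $EZ^{n}$. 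Thus the moments of $f_{CN}$ converge to those of $N(\rho y,1-\rho^{2})$, and summing $\varphi_{CN}(t)=\sum_{n}\frac{t^{n}}{n!}EX^{n}$ yields the generating function $\exp(\rho yt+(1-\rho^{2})t^{2}/2)$. Part (ii) is then the specialization $\rho=0$: since $W_{k}(x,y|0,q)=1$ we have $f_{CN}(x|y,0,q)=f_{N}(x|q)$, so the left-hand side of (ii) is that of (iii) at $\rho=0$, and the limit is $\exp(t^{2}/2)$, the moment generating function of the limiting $N(0,1)$ law (so the exponent displayed on the right of (ii) should carry a plus sign).

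The one genuine difficulty is the passage $\lim_{q\to1^{-}}$ inside the infinite series defining the moment generating functions in (ii)--(iii): the moments converge termwise by (i), but the sum over $n$ (equivalently over $k$ and $j$ in the Bessel series) is infinite. I would justify the interchange by bounding the $q$-moments uniformly for $q$ in a left neighborhood of $1$, so that $\sum_{n}\frac{t^{n}}{n!}EX^{n}(q)$ converges locally uniformly in $(t,q)$ near $(0,1)$, and then apply dominated convergence to the coefficients; since the limiting moment sequence (that of a normal law) satisfies Carleman's condition, the limiting generating function is analytic at $0$ and determined by its moments, which pins it to the stated Gaussian form. A direct analytic route, replacing each $I_{2j+k+1}(2t/\sqrt{1-q})$ by its large-argument asymptotic as $2/\sqrt{1-q}\to\infty$ and exploiting the cancellation forced by the alternating weights $q^{\binom{k+1}{2}}$, is available but markedly more delicate, which is why I prefer the moment route.
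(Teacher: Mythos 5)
Your proposal is correct, but it runs the argument in the opposite direction from the paper. The paper proves (ii)/(iii) first, by the one\--line observation that $\int \exp (tx)f_{CN}(x|y,\rho ,1)\,dx\allowbreak =\allowbreak \exp (t\rho y+(1-\rho ^{2})t^{2}/2)$, tacitly assuming that $\varphi _{CN}(t,y,\rho ,q)$ converges as $q\rightarrow 1^{-}$ to the moment generating function of the limiting density; it then deduces (i) from this: moment convergence, the moment formula of Proposition~\ref{mqCN}, and uniqueness of the expansion in the linearly independent family $H_{n-2m}(y|q)$ force the coefficients $(1-q)^{-m}c_{m,n}(q)$ to converge to the classical Hermite coefficients. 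You do the reverse: (i) is proved purely algebraically, by coefficientwise convergence of the monic basis $H_{k}(x|q)\rightarrow H_{k}(x)$ and triangular uniqueness of the expansion of the fixed polynomial $x^{n}$ (and you rightly use the exponent $(1-q)^{-m}$ rather than the $(1-q)^{-2m}$ misprinted in the paper's first proposition); then (iii), with (ii) as its $\rho =0$ specialization, follows by pushing (i) through the finite moment formula and summing the moment series. Your direction makes (i) unconditional and elementary, and it is what let you detect two genuine errors in the stated corollary, both of which I confirm: the denominator in (i) must be $2^{m}m!(n-2m)!$, not $2^{n}m!(n-2m)!$ (already $m=0$ decides it, since $c_{0,n}(q)=1$ while $n!/(2^{n}0!n!)=2^{-n}$; your check $c_{1,2}(q)=1-q$ works too, as does $c_{2,4}(q)=(1-q)^{2}(2+q)$ with limit $3$), and the right\--hand side of (ii) must be $\exp (+t^{2}/2)$, the moment generating function of $N(0,1)$, as the paper's own proof of (ii) and the $\rho =0$ case of (iii) both show.

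The one step you leave as a sketch is the interchange of $q\rightarrow 1^{-}$ with the infinite series in (ii)/(iii): you correctly isolate it and propose uniform\--in\--$q$ moment bounds plus dominated convergence, but you do not produce the bound. It is available: iterating $xH_{k}(x|q)=H_{k+1}(x|q)+[k]_{q}H_{k-1}(x|q)$ shows that $(1-q)^{-m}c_{m,n}(q)$, and (up to sign) the coefficients of $H_{k}(y|q)$, are polynomials in $q$ with nonnegative coefficients, hence bounded on $q\in \lbrack 0,1]$ by their values at $q=1$; this dominates $\sum_{n}|t|^{n}|EX_{q}^{n}|/n!$ by a Gaussian\--type series convergent for every $t$, and dominated convergence then identifies the limit. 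Your appeal to Carleman's condition is superfluous on this route; it would matter only if you argued via weak convergence instead. Since the paper performs exactly this interchange silently, your acknowledged sketch is no weaker than the published proof, but the bound should be written out for the argument to be complete.
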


\begin{proof}
i) We apply the following well known expansion 
\begin{equation*}
x^{n}\allowbreak =\allowbreak \sum_{m=0}^{\left\lfloor n/2\right\rfloor }%
\frac{n!}{2^{n}m!(n-2m)!}H_{n-2m}(x)
\end{equation*}%
and the fact that $\int_{-\infty }^{\infty }H_{n}(x)f_{CN}(x|y,\rho
,1)dx\allowbreak =\allowbreak \rho ^{n}H_{n}(y)$ obtaining $n-$th moment of
the $f_{CN}(x|y,\rho ,1)$ distribution:%
\begin{equation*}
\int_{-\infty }^{\infty }x^{n}f_{CN}(x|y,\rho ,1)dx=\sum_{m=0}^{\left\lfloor
n/2\right\rfloor }\frac{n!}{2^{n}m!(n-2m)!}\rho ^{n-2m}H_{n-2m}(y).
\end{equation*}%
Now applying uniqueness of the expansion in orthogonal polynomials and
assertion ii) we deduce our limit.

ii) We have $\int_{-\infty }^{\infty }\exp (tx)\exp (-(x-\rho
y)^{2}/(2(1-\rho ^{2})))dx/\sqrt{2\pi (1-\rho ^{2}}\allowbreak =\allowbreak
\exp (t\rho y+(1-\rho ^{2})t^{2}/2).$
\end{proof}

\section{Proofs\label{dow}}

\begin{proof}[Proof of Proposition \protect\ref{mqN}]
i) We use the following expansion following expansion and its obvious
modification:%
\begin{eqnarray}
f_{H}(x|q) &=&\frac{\sqrt{1-q}}{2\pi }\sqrt{4-(1-q)x^{2}}\sum_{m=0}^{\infty
}(-1)^{m}q^{\binom{m+1}{2}}U_{2m}(x\sqrt{1-q}/2)),  \label{sz_exp1} \\
f_{h}(x|q) &=&\frac{2\sqrt{1-x^{2}}}{\pi }\sum_{m=0}^{\infty }(-1)^{m}q^{%
\binom{m+1}{2}}U_{2m}(x),  \label{sz_exp2}
\end{eqnarray}%
given in \cite{Szablowski2010(1)}[Lemma 2, iv]. Secondly we apply (\ref%
{expand}) and the fact that polynomials $U_{n}(x/2)$ are orthogonal with
respect to Wigner distribution.

ii) We have: $\int_{-2/\sqrt{1-q}}^{2/\sqrt{1-q}}\exp
(yt)f_{H}(y|q)dy\allowbreak =\allowbreak $\newline
$\sum_{j=0}^{\infty }\frac{t^{2j}}{(2j+1)!(1-q)^{j}}\sum_{k=0}^{j}(-1)^{k}q^{%
\binom{k+1}{2}}(2k+1)\binom{2j+1}{j-k}\allowbreak $\newline
$=\allowbreak \sum_{k=0}^{\infty }(-1)^{k}q^{\binom{k+1}{2}%
}(2k+1)\sum_{j=k}^{\infty }\frac{t^{2j}}{(j-k)!(j+k+1)!(1-q)^{j}}\allowbreak 
$\newline
$=\allowbreak \sum_{k=0}^{\infty }(-1)^{k}q^{\binom{k+1}{2}%
}(2k+1)\sum_{m=0}^{\infty }\frac{t^{2m+2k}}{m!(2k+m+1)!(1-q)^{k+m}}$. Now it
is enough to recall that $I_{\alpha }(t)\allowbreak =\allowbreak
\sum_{m=0}^{\infty }\frac{(t/2)^{2m+\alpha }}{m!\Gamma (m+\alpha +1)}$.

To show (\ref{mom2}) we have: $\int_{-1}^{1}\exp (xt)f_{h}(x|q)dx\allowbreak
=\allowbreak \sum_{j=0}^{\infty }\frac{t^{2j}}{(2j+1)!4^{j}}%
\sum_{k=0}^{j}(-1)^{k}q^{\binom{k+1}{2}}(2k+1)\binom{2j+1}{j-k}\allowbreak
=\allowbreak \sum_{k=0}^{\infty }(-1)^{k}q^{\binom{k+1}{2}%
}(2k+1)\sum_{j=k}^{\infty }\frac{t^{2j}}{(j-k)!(j+k+1)!4^{j}}=\allowbreak
\sum_{k=0}^{\infty }(-1)^{k}q^{\binom{k+1}{2}}(2k+1)\sum_{m=0}^{\infty }%
\frac{t^{2m+2k}}{m!(2k+m+1)!4^{k+m}}.$
\end{proof}

\begin{proof}[Proof of Proposition \protect\ref{mqCN}]
i), ii) We use the following three facts: One is (\ref{expand}), the second (%
\ref{tran2}, \ref{tran1}), the third formulae (\ref{sz_exp1}) and (\ref%
{sz_exp2}). Using them we have: \newline
$(1-q)^{n/2}EX^{n}\allowbreak =\allowbreak \frac{1}{n+1}\sum_{k=0}^{\left%
\lfloor n/2\right\rfloor }(n-2k+1)\binom{n+1}{k}\int_{-2/\sqrt{1-q}}^{2/%
\sqrt{1-q}}U_{n-2k}(x\sqrt{1-q}/2)f_{CN}(x|y,\rho ,q)dx\allowbreak $\newline
$=\allowbreak \frac{1}{n+1}\sum_{k=0}^{\left\lfloor n/2\right\rfloor
}(n-2k+1)\binom{n+1}{k}\times $\newline
$\sum_{j=0}^{\left\lfloor n/2\right\rfloor -k}\left( -1\right)
^{j}(1-q)^{n/2-k-j}q^{j\left( j+1\right) /2}\QATOPD[ ] {n-2k-j}{j}_{q}\rho
^{n-2k-2j}H_{n-2k-2j}\left( y|q\right) \allowbreak $\newline
$=\allowbreak \frac{1}{n+1}\sum_{m=0}^{\left\lfloor n/2\right\rfloor
}(1-q)^{n/2-m}\rho ^{n-2m}H_{n-2m}(y|q)\times $\newline
$\sum_{j=0}^{m}(-1)^{j}q^{j(j+1)/2}(n-2m+2j+1)\binom{n+1}{m-j}\QATOPD[ ] {%
n-2m+j}{j}_{q}$.

$\varphi _{CN}(t,y,\rho ,q)\allowbreak =\allowbreak \sum_{n=0}^{\infty }%
\frac{t^{n}}{n!}\sum_{m=0}^{\left\lfloor n/2\right\rfloor }(1-q)^{-m}\rho
^{n-2m}H_{n-2m}(y|q)c_{m,n}(q)$. \newline
We have further: \newline
$\varphi _{CN}(t,y,\rho ,q)\allowbreak \allowbreak =\allowbreak
\sum_{m=0}^{\infty }t^{2m}/(1-q)^{m}\sum_{n=2m}^{\infty }\frac{t^{n-2m}}{n!}%
\rho ^{n-2m}H_{n-2m}(y|q)c_{m,n}\allowbreak $\newline
$=\sum_{m=0}^{\infty }t^{2m}/(1-q)^{m}\sum_{k=0}^{\infty }\frac{t^{k}}{%
(k+2m)!}\rho ^{k}H_{k}(y|q)c_{m,k+2m}(q)\allowbreak =\allowbreak $\newline
$\sum_{k=0}^{\infty }t^{k}\rho ^{k}H_{k}(y|q)\sum_{m=0}^{\infty }\frac{t^{2m}%
}{(1-q)^{m}(k+2m)!}c_{m,k+2m}\allowbreak $\newline
$=\allowbreak \allowbreak \sum_{k=0}^{\infty }t^{k}\rho
^{k}H_{k}(y|q)\sum_{m=0}^{\infty }\frac{t^{2m}}{(1-q)^{m}(k+2m+1)!}%
\sum_{j=0}^{m}(-1)^{j}q^{j(j+1)/2}(k+2j+1)\binom{k+2m+1}{m-j}\QATOPD[ ] {k+j%
}{j}_{q}\allowbreak $\newline
$=\allowbreak \sum_{k=0}^{\infty }\frac{t^{k}}{\left[ k\right] _{q}!}\rho
^{k}H_{k}(y|q)\sum_{j=0}^{\infty }(-1)^{j}\frac{t^{2j}\left[ k+j\right]
_{q}!(k+2j+1)}{(1-q)^{j}\left[ j\right] _{q}!}q^{j(j+1)/2}\sum_{m=j}^{\infty
}\frac{t^{2m-2j}}{(1-q)^{m-j}(m-j)!(k+m+j+1)!}\allowbreak $\newline
$=\allowbreak \sum_{k=0}^{\infty }\frac{t^{k}}{\left[ k\right] _{q}!}\rho
^{k}H_{k}(y|q)\sum_{j=0}^{\infty }(-1)^{j}\frac{t^{2j}\left[ k+j\right]
_{q}!(k+2j+1)}{\left[ j\right] _{q}!(1-q)^{j}}q^{j(j+1)/2}\sum_{n=0}^{\infty
}\frac{t^{2n}}{(1-q)^{n}n!(k+2j+1+n)!}\allowbreak $\newline
$=\allowbreak \sum_{k=0}^{\infty }\frac{t^{k}}{\left[ k\right] _{q}!}\rho
^{k}H_{k}(y|q)\sum_{j=0}^{\infty }(-1)^{j}\frac{t^{2j}\left[ k+j\right]
_{q}!(k+2j+1)}{(1-q)^{j}\left[ j\right] _{q}!}q^{j(j+1)/2}\allowbreak $%
\newline
$\times \allowbreak (1-q)^{j+k/2+1/2}t^{-2j-k-1}I_{2j+k+1}(2t/\sqrt{1-q}%
)\allowbreak $\newline
$=\allowbreak \frac{\sqrt{1-q}}{t}\sum_{k=0}^{\infty }\frac{(1-q)^{k/2}}{%
\left[ k\right] _{q}!}\rho ^{k}H_{k}(y|q)\sum_{j=0}^{\infty }(-1)^{j}\frac{%
\left[ k+j\right] _{q}!(k+2j+1)}{\left[ j\right] _{q}!}%
q^{j(j+1)/2}I_{2j+k+1}(2t/\sqrt{1-q})$

ii) The proof of the first formula is analogous.

\ Further we have:

$\varphi _{Q}(t,a,b,q)\allowbreak =\allowbreak \sum_{n=0}^{\infty }\frac{%
t^{n}}{n!}\frac{1}{2^{n}}\sum_{j=0}^{\left\lfloor n/2\right\rfloor
}c_{j,n}S_{n-2j}(a,b|q)\allowbreak $\newline
$=\allowbreak \sum_{m=0}^{\infty }\frac{t^{2m}}{4^{m}}\sum_{n=2m}^{\infty }%
\frac{t^{n-2m}}{2^{n-2m}n!}S_{n-2m}(a,b|q)c_{m,n}\allowbreak $\newline
$=\allowbreak \sum_{m=0}^{\infty }\frac{t^{2m}}{4^{m}}\sum_{n=2m}^{\infty }%
\frac{t^{k}}{2^{k}(k+2m)!}S_{k}(a,b|q)c_{m,k+2m}\allowbreak $\newline
$=\allowbreak \sum_{k=0}^{\infty }\frac{t^{k}S_{k}(a,b|q)}{2^{k}}%
\sum_{m=0}^{\infty }\frac{t^{2m}}{4^{m}}c_{m,k+2m}/(k+2m)!\allowbreak $%
\newline
$=\allowbreak \sum_{k=0}^{\infty }\frac{t^{k}S_{k}(a,b|q)}{2^{k}}%
\sum_{m=0}^{\infty }\frac{t^{2m}}{4^{m}}%
\sum_{j=0}^{m}(-1)^{j}q^{j(j+1)/2}(k+2j+1)\binom{k+2m+1}{m-j}\QATOPD[ ] {k+j%
}{j}_{q}\allowbreak $\newline
$=\allowbreak \sum_{k=0}^{\infty }\frac{t^{k}S_{k}(a,b|q)}{2^{k}(q)_{k}}%
\sum_{j=0}^{\infty }(-1)^{j}\frac{t^{2j}(q)_{k+j}(k+2j+1)}{2^{2j}(q)_{j}}%
q^{j(j+1)/2}\sum_{m=j}^{\infty }\frac{t^{2m-2j}}{4^{m-j}(m-j)!(k+m+j+1)!}%
\allowbreak $

$=\allowbreak \sum_{k=0}^{\infty }\frac{t^{k}S_{k}(a,b|q)}{2^{k}(q)_{k}}%
\sum_{j=0}^{\infty }(-1)^{j}\frac{t^{2j}(q)_{k+j}(k+2j+1)}{2^{2j}(q)_{j}}%
q^{j(j+1)/2}\sum_{n=0}^{\infty }\frac{t^{2n}}{4^{n}(n)!(k+n+2j+1)!}%
\allowbreak $\newline
$=\sum_{k=0}^{\infty }\frac{t^{k}S_{k}(a,b|q)}{2^{k}(q)_{k}}%
\sum_{j=0}^{\infty }(-1)^{j}\frac{t^{2j}(q)_{k+j}(k+2j+1)}{2^{2j}(q)_{j}}%
q^{j(j+1)/2}\frac{2^{2j+k+1}}{t^{2j+k+1}}I_{2j+k+1}(t)\allowbreak $\newline
$=\allowbreak \frac{2}{t}\sum_{k=0}^{\infty }\frac{S_{k}(a,b|q)}{(q)_{k}}%
\sum_{j=0}^{\infty }(-1)^{j}\frac{(q)_{k+j}(k+2j+1)}{(q)_{j}}%
q^{j(j+1)/2}I_{2j+k+1}(t).$
\end{proof}

\end{document}